\documentclass[12pt]{amsart}
\usepackage{amsmath,amsthm,amssymb,enumerate}
\usepackage{comment}
\usepackage{color}
\topmargin 3mm
\evensidemargin 0mm
\oddsidemargin 0mm
\textwidth 160mm
\textheight 200mm

\newcommand{\tree}[2]{\mathcal{T}_{#1, #2}}
\newcommand{\N}[2]{\mathcal{N}_{#1, #2}}
\newcommand{\aut}[1]{\operatorname{Aut}{(#1)}}

\newcommand{\C}{\mathbb{C}}
\newcommand{\bs}{\backslash}

\newcommand{\halg}[2]{\mathcal{H} (#1, #2)}
\newcommand{\innprod}[2]{\langle #1 , #2 \rangle}
\newcommand{\btimes}{\mathbin{\bar{\otimes}}}
\newcommand{\ogr}{\mathcal{O}_{d, k}}

\newcommand{\A}{\mathcal{A}}
\renewcommand{\S}{\mathfrak{S}}
\renewcommand{\phi}{\varphi}
\newtheorem{main}{Theorem}

\newtheorem{theorem}{Theorem}[section]
\newtheorem{lemma}[theorem]{Lemma}
\newtheorem{proposition}[theorem]{Proposition}

\newtheorem{corollary}[theorem]{Corollary}

\newtheorem*{definition}{Definition}

\theoremstyle{remark}
\newtheorem*{remark}{Remark}
\title[On an open subgroup of the Neretin group]{On the type of the von Neumann algebra of an open subgroup of the Neretin group}
\author{Ryoya Arimoto}
\address{RIMS, Kyoto University, \mbox{606-8502} Japan}
\email{arimoto@kurims.kyoto-u.ac.jp}
\subjclass{20E08, 22D10, 46L10}

\keywords{Neretin group, type I group, group von Neumann algebra}

\begin{document}

\begin{abstract}
The Neretin group $\N{d}{k}$ is the totally disconnected locally compact group 
consisting of almost automorphisms of the tree $\tree{d}{k}$. 
This group has a distinguished open subgroup $\ogr$. 
We prove that this open subgroup is not of type I.
This gives an alternative proof of the recent result of P.-E. Caprace, A. Le Boudec and N. Matte Bon 
which states that the Neretin group is not of type I, 
and answers their question whether $\ogr$ is of type I or not.
\end{abstract}

\maketitle
\section{Introduction}
The Neretin group $\N{d}{k}$ was introduced by Yu. A. Neretin in \cite {ne}  
as an analogue of the diffeomorphism group of the circle.
This group $\N{d}{k}$ consists of almost automorphisms of the tree $\tree{d}{k}$ 
and is a totally disconnected locally compact Hausdorff group.
It has a distinguished open subgroup $\ogr$; for an accurate definition, see Section 3.1.
Recently, P.-E. Caprace, A. Le Boudec and N. Matte Bon proved that 
the Neretin group $\N{d}{k}$ is not of type I 
by constructing two weakly equivalent but inequivalent irreducible representations of $\N{d}{k}$ (\cite{cbb}).
In their paper, they conjectured that the subgroup $\ogr$ of the Neretin group $\N{d}{k}$ is not type I either (\cite[Remark 4.8]{cbb}).
Our main theorem answers their question.
\begin{main}
The group von Neumann algebra of $L(\ogr)$ of the open subgroup $\ogr$ of the Neretin group $\N{d}{k}$ is of type II.
In particular, the open subgroup $\mathcal{O}_{d, k}$ of the Neretin group $\N{d}{k}$ is not of type I.
\end{main}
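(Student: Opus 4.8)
The plan is to prove that $L(\ogr)$ has no nonzero abelian projection. As an open subgroup of the unimodular Neretin group $\N{d}{k}$, the group $\ogr$ is itself unimodular, so $L(\ogr)$ carries a faithful normal semifinite trace; a semifinite von Neumann algebra without abelian projections has neither a type $\mathrm I$ nor a type $\mathrm{III}$ part and is therefore of type $\mathrm{II}$. So it is enough to exclude a type $\mathrm I$ summand.

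To do this I would first pass to a finite compression. Fix a compact open subgroup $K \le \ogr$ — for instance $K = \aut{\tree{d}{k}}$, or a level stabiliser inside it — let $\mu$ be a Haar measure on $\ogr$, and set $p = \mu(K)^{-1}\mathbf 1_K \in L(\ogr)$. Then $pL(\ogr)p$ is canonically identified with the von Neumann algebra generated, on $\ell^2(K\bs\ogr)$, by the convolution action of the Hecke $*$-algebra $\halg{\ogr}{K}$ of bi-$K$-invariant functions, and the vector state of $p$ is a faithful normal trace on it; hence $pL(\ogr)p$ is finite, and since $\ogr$ is non-compact the double coset set $\dc{\ogr}{K}$ is infinite, so $pL(\ogr)p$ is infinite-dimensional. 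Letting $K$ run over a neighbourhood basis of the identity by compact open subgroups, the corresponding projections increase strongly to $1$, and a standard comparison argument shows that $L(\ogr)$ is a factor without abelian projections provided every such compression $pL(\ogr)p$ is a factor. Thus everything reduces to one claim: for a neighbourhood basis of compact open subgroups $K$, the Hecke von Neumann algebra of the pair $(\ogr,K)$ is a factor. Indeed, it is then a finite, infinite-dimensional factor, i.e.\ a $\mathrm{II}_1$ factor, so $L(\ogr)$ is a factor containing the finite projection $p$ with $pL(\ogr)p$ of type $\mathrm{II}_1$; being semifinite, $L(\ogr)$ is then a $\mathrm{II}_1$ or $\mathrm{II}_\infty$ factor, in particular of type $\mathrm{II}$. (Should factoriality turn out to be too strong, it would be enough to show this Hecke von Neumann algebra has no abelian projection, which by the same comparison still yields the theorem.)

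The heart of the matter is the factoriality of $(\ogr,K)$, and here the combinatorics of almost automorphisms must be used. From the explicit model of $\ogr$ one obtains a description of $\dc{\ogr}{K}$ in terms of pairs of finite subtrees of $\tree{d}{k}$ with matchings of their leaves, together with the convolution rule of $\halg{\ogr}{K}$. The centre of $pL(\ogr)p$ is trivial if and only if an ICC-type condition on this double coset structure holds; concretely, it suffices to show that for every $g \in \ogr \setminus K$ the family $\{\,K\kappa g\kappa^{-1}K : \kappa \in K\,\}$ of double cosets is infinite, after which an averaging argument rules out any conjugation-invariant element of the Hecke algebra, other than a multiple of $\mathbf 1_K$, being central. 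The geometric mechanism that produces infinitely many such $\kappa$ is the abundance in $\tree{d}{k}$ of branches, and of automorphisms of the rooted subtrees hanging off them, which allow $g$ to be moved through infinitely many distinct double cosets while staying inside the compact subgroup $K$. Granting this, $pL(\ogr)p$ is a $\mathrm{II}_1$ factor and the theorem follows.

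I expect this last step — the ICC-type condition, equivalently triviality of the centre of the Hecke von Neumann algebra — to be the main obstacle. In contrast to the failure of type $\mathrm I$ for the whole Neretin group, it is sensitive to the precise definition of the distinguished open subgroup: one must understand quantitatively how conjugation by $K$ spreads an arbitrary element of $\ogr$ over double cosets and exclude any residual finite conjugation-invariant ``mass''. By comparison, the semifiniteness reduction, the identification of $pL(\ogr)p$ with the Hecke von Neumann algebra, and the comparison of $L(\ogr)$ with its compressions are routine.
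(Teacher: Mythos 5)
Your reduction steps are fine and match the paper's setup: compress to $p=\mu(K)^{-1}\chi_K$, identify $pL(\ogr)p$ with the weak closure of the Hecke algebra $\halg{\ogr}{K}$ acting on $\ell^2(K\bs\ogr)$ with the faithful tracial vector state from $\delta_K$ (unimodularity of $\ogr$ gives the trace), and let the projections run over a neighbourhood basis of compact open subgroups to pass from the corners back to $L(\ogr)$. But the heart of your argument --- excluding a type I part of the corner --- is not carried out, and the criterion you propose for it is incorrectly formulated. For $\kappa\in K$ one has $K\kappa g\kappa^{-1}K=KgK$, so the family $\{K\kappa g\kappa^{-1}K:\kappa\in K\}$ is always a single double coset and the stated ``ICC-type condition'' can never hold. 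Even after repairing it, triviality of the centre is both more than you need and quite possibly false or at best unknown here: $\halg{\ogr}{K}$ is an increasing union of finite-dimensional algebras $\halg{\S_{|V_n|}}{P_n}$, and factoriality of such an AF limit is a delicate Bratteli-diagram question that the paper neither claims nor uses. Your fallback (``no abelian projection'') is the right target, but you give no mechanism for it beyond an appeal to ``abundance of branches.''

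The paper closes this gap by a genuinely different device: central sequences rather than centre-triviality. Using $\ogr^{(n+l)}>\mathcal{O}_{d,d}^{(l)}\wr\S_{|V_n|}$ and the Laca--Larsen--Neshveyev embedding $(\halg{\mathcal{O}_{d,d}^{(l)}}{\aut{\tree{d}{d}}}^{\otimes|V_n|})^{P_n}\hookrightarrow\halg{\ogr^{(n+l)}}{K}$, it observes that $\halg{\mathcal{O}_{d,d}^{(3)}}{\aut{\tree{d}{d}}}\cong\halg{\S_{d^3}}{Q_3}$ is noncommutative (non-Gelfand pair), picks unitaries $u,v$ there with $|\tau((uvu^*v^*)^k)|<1$ for all $k\neq0$, and sets $u_n=u^{\otimes|V_n|}$, $v_n=v^{\otimes|V_n|}$. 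These commute with $\halg{\ogr^{(n)}}{K}$, hence form central sequences in $pL(\ogr)p$, while $\tau((u_nv_nu_n^*v_n^*)^k)=\tau((uvu^*v^*)^k)^{|V_n|}\to0$; since every central sequence in a type I von Neumann algebra is trivial, a short spectral/averaging argument (Lemma 2.3 of the paper) kills every type I summand. Note this route is entirely compatible with the corner failing to be a factor, and it locates the non-type-I phenomenon in a concrete finite computation about $(\S_{d^3},Q_3)$ instead of an infinite double-coset combinatorics. If you want to salvage your plan, you would need either to prove the absence of abelian projections directly from the Bratteli data of $\bigcup_n\halg{\S_{|V_n|}}{P_n}$, or to adopt a central-sequence argument of this kind.
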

This theorem gives an alternative proof of the fact that the Neretin group $\N{d}{k}$ is not of type I, 
since the type I property is inherited to open subgroups.
In the proof of our main theorem, 
we construct a nontrivial central sequence in the corner of the group von Neumann algebra $L(\ogr)$.


\subsection*{Acknowledgements}
The author would like to express his deep gratitude to his supervisor, Professor Narutaka Ozawa, for his support and providing many insightful comments.
This work was supported by JST SPRING, Grant Number JPMJSP2110 and by JSPS KAKENHI, Grant Number 20H01806.

\section{Preliminaries}
\subsection{Von Neumann algebras}
We refer the reader to \cite{d} for basics about von Neumann algebras.
We review several topologies we use.
Let $H$ be a separable Hilbert space.
For $\xi \in H$, seminorms $p_{\xi} , 
p_{\xi}^*$ on $B(H)$ are defined by $p_{\xi}(x) = \| x \xi \|$ and $p_{\xi}^* (x) = \| x^* \xi \|$.
The topology defined by these seminorms $\{ p_{\xi} \mid \xi \in H \} \cup \{ p_{\xi}^* \mid \xi \in H \}$ on $B(H)$ is called \textbf{strong-$*$ operator topology}.
For $\{ \xi _n \} \in \ell ^2 \otimes H = \{ \{ \xi _n \} \mid \xi _n \in H , \sum _{n=1}^{\infty} \| \xi _n \| ^2 < \infty \}$ , 
seminorms $q_{\{ \xi _n \} } , q_{\{ \xi _n \} }^*$ are defined 
by $q_{\{ \xi _n \} } (x) = ( \sum_{n=1}^{\infty} \| x \xi_n \| ^2 )^{\frac{1}{2}}$ 
and $q_{\{ \xi _n \} }^* (x) = ( \sum_{n=1}^{\infty} \| x^* \xi_n \| ^2 )^{\frac{1}{2}}$.
The topology defined by these seminorms $\{ q_{\{ \xi _n \} } \mid \{ \xi _n \} \in \ell ^2 \otimes H \} \cup \{ q_{\{ \xi _n \} }^* \mid \{ \xi _n \} \in \ell ^2 \otimes H \}$ on $B(H)$ is called \textbf{ultrastrong-$*$ topology}.
Note that these two topologies coincide on bounded subsets of $B(H)$.

We also review definitions of types of von Neumann algebras.
A von Neumann algebra $M$ is of \textbf{type I} 
if it is isomorphic to $\prod _{j \in J} \mathcal{A}_j \btimes B(H_j) $ 
for some set $J$ of cardinal numbers, 
where $\mathcal{A} _j$ is an abelian von Neumann algebra 
and $H_j$ is a Hilbert space of dimension $j$.
A von Neumann algebra $M$ is of \textbf{type II$_\mathbf{1}$} 
if it has no nonzero summand of type I and 
there exists a separating family of normal tracial states.
A von Neumann algebra $M$ is of \textbf{type II$_\mathbf{\infty}$} 
if it has no nonzero summand of type I or $\mathrm{II_1}$ 
but there exists an increasing net of projections $\{ p_i \} _{i \in I} \subset M$ converging strongly to $1_M$ 
such that $p_i M p_i$ is of type $\mathrm{II}_1$ for every $i \in I$. 
A von Neumann algebra $M$ is of \textbf{type II} 
if it is a direct sum of a type II$_1$ and a type II$_{\infty}$ von Neumann algebra.
A von Neumann algebra $M$ is of \textbf{type III} 
if it has no nonzero summand of type I, $\mathrm{II_1}$ or $\mathrm{II_{\infty}}$.
Every von Neumann algebra $M$ has a unique decomposition $M \cong M_{\mathrm{I}} \oplus M_{\mathrm{II}} \oplus M_{\mathrm{III}}$ 
where $M_{\mathrm{I}} , M_{\mathrm{II}} , M_{\mathrm{III}}$ are of type I, type II, type III respectively.

We review types of von Neumann algebras from the perspective of central sequences 
and obtain a criterion of having no nonzero type I summand.

\begin{definition}
Let $M$ be a separable von Neumann algebra.
A \textbf{central sequence} of $M$ is a sequence $\{ u_n \}$ of unitary elements in $M$ 
such that $[x , u_n]$ converges to $0$ in the ultrastorong-$*$ topology for all $x \in M$.
A central sequence $\{ u_n \}$ of $M$ is \textbf{trivial} 
if there exits a sequence $\{ z_n \}$ of unitary elements of the center of $M$ 
such that $u_n - z_n$ converges to $0$ in the ultrastorong-$*$ topology.
\end{definition}

\begin{remark}
A sequence $\{ u_n \}$ of unitary elements in $M$ is a central sequence 
if and only if there exists $M_0 \subset M$ such that $M_0 '' = M$ and 
for all $x \in M_0$, $[x , u_n] \to 0$ in the ultrastrong-$*$ topology.
\end{remark}

A. Connes showed that any type I factor has no nontrivial central sequence (\cite[Corollary 3.10]{co}) and 
this fact can be easily extended to type I von Neumann algebras.

\begin{lemma}\label{keylem}
Let $M$ be a separable von Neumann algebra.
If $M$ is of type I, then every central sequence of $M$ is trivial.
\end{lemma}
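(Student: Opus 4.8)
The plan is to reduce to the case of a type I factor, where one can invoke Connes's result [co, Corollary 3.10], and then handle the passage from factors to general type I von Neumann algebras by a direct integral / product decomposition argument. By the definition of type I given above, I may assume $M \cong \prod_{j \in J} \A_j \btimes B(H_j)$ with each $\A_j$ abelian and $\dim H_j = j$. So let $\{u_n\}$ be a central sequence of $M$; I must produce unitaries $z_n$ in the center $Z(M) \cong \prod_{j} \A_j$ with $u_n - z_n \to 0$ ultrastrongly$^*$.

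First I would treat a single summand $N = \A \btimes B(H)$ with $\A$ abelian. Fix a faithful normal representation $\A \subset B(K)$, so $N$ acts on $K \otimes H$. The idea is the one already sketched in the commented-out Proposition: if $p \in B(H)$ is a rank-one projection, then for any $\xi \in K$, $\eta \in H$ the central sequence condition gives $[1 \otimes p, u_n](\xi \otimes \eta) \to 0$, and $(1 \otimes p) u_n (\xi \otimes \eta)$ lies in $\A \xi \otimes \C\eta$, so it equals $(a_n^{(\xi,\eta)} \otimes 1)(\xi \otimes \eta)$ for some $a_n \in \A$. Running this over a countable total set of $\eta$'s and using separability, together with the fact that commuting with all such $1\otimes p$ forces $u_n$ to be asymptotically of the form $a_n \otimes 1$, one shows there are contractions $a_n \in \A$ with $(u_n - a_n \otimes 1) \to 0$ ultrastrongly$^*$ on a dense set, hence (by uniform boundedness) ultrastrongly$^*$. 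Since $u_n$ is unitary, $a_n \otimes 1$ is asymptotically unitary; replacing $a_n$ by $a_n(|a_n|^2+\varepsilon_n)^{-1/2}$ with $\varepsilon_n \downarrow 0$ slowly, or by a polar-decomposition correction, yields genuine unitaries $z_n \in \A$, and $z_n \otimes 1 \in Z(N)$ with $u_n - z_n \otimes 1 \to 0$. This proves the lemma for a single summand; in fact it proves the apparently stronger statement that every sequence $\{u_n\}$ of contractions with $[1\otimes p, u_n]\to 0$ for all finite-rank $p$ is asymptotically central in $Z(N)$.

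Next I would handle the product $M = \prod_{j \in J} N_j$. Here one must be careful: $J$ is a set of cardinals, possibly uncountable, but $M$ is separable, so only countably many $N_j$ are nonzero and we may take $J$ countable; moreover each $\A_j$ is then separable. Let $e_j \in Z(M)$ be the central projection onto the $j$-th factor. For a central sequence $\{u_n\}$ of $M$ and each fixed $j$, $\{u_n e_j\}$ is a central sequence of $N_j = e_j M$ (commutators with elements of $N_j \subset M$ go to $0$), so by the single-summand case there are central unitaries $z_n^{(j)} \in Z(N_j)$ with $u_n e_j - z_n^{(j)} \to 0$ ultrastrongly$^*$ in $N_j$. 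Setting $z_n = \sum_j z_n^{(j)} \in Z(M)$ (a unitary, the sum being taken in the product), I claim $u_n - z_n \to 0$ ultrastrongly$^*$ in $M$. This is where I expect the one genuine technical point: the estimate $\|(u_n - z_n)\xi\|^2 = \sum_j \|(u_n - z_n) e_j \xi\|^2$ must be shown to go to $0$ for each $\xi$ in the Hilbert space on which $M$ is represented, and a priori the convergence in each coordinate is not uniform in $j$. The standard fix is a diagonal/tail argument: given $\xi$ and $\varepsilon$, choose a finite $F \subset J$ with $\sum_{j \notin F}\|e_j \xi\|^2 < \varepsilon$ (so the tail contributes $\le 4\varepsilon$ regardless of $n$, as $\|u_n - z_n\| \le 2$), and then choose $n$ large enough that each of the finitely many terms with $j \in F$ is small; the same works for the $q_{\{\xi_n\}}$ and $q^*$ seminorms by an interchange-of-summation argument justified by dominated convergence. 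This establishes that $\{u_n\}$ is trivial, completing the proof.

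The main obstacle is genuinely this last uniformity-over-summands issue; everything else is either the definition-unwinding reduction or the factor case, which is essentially [co, Corollary 3.10] (or the elementary argument in the commented Proposition, upgraded from strong to ultrastrong$^*$ using that the two topologies agree on bounded sets and that unitaries are bounded). If one prefers, one can bypass the explicit product decomposition and instead use the direct-integral decomposition of a type I von Neumann algebra into type I factors over a standard measure space, applying the factor case fiberwise and measurably selecting the central unitaries $z_n$; the measurable-selection step then replaces the tail argument as the point requiring care. I would present the product version since the hypothesis as stated already gives us that decomposition.
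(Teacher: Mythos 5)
Your proposal is correct and follows essentially the same route as the paper: compress $u_n$ by $1\otimes p$ for a rank-one projection $p$ to extract $a_n\in\A$, use asymptotic commutation with rank-one (Schatten-form) operators to show $u_n-a_n\otimes 1\to 0$ in the strong* topology, and correct $a_n$ to a unitary of $\A$ via polar decomposition after checking $|a_n|\to 1$. The only difference is that you additionally carry out the reduction from a general separable type I algebra $\prod_{j}\A_j\btimes B(H_j)$ to a single summand via the tail estimate over the central projections $e_j$; the paper elides this by assuming $M\cong\A\btimes B(H)$ outright, so your extra care is correct and in fact fills a small gap.
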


\begin{proof}
We may assume that $M$ is isomorphic to $\A \btimes B(H)$ 
for some separable abelian von Neumann algebra $\A$ and some separable Hilbert space $H$.
Let $\{ u_n \}$ be a central sequence in $M$.
Take some unit vector $\eta _0 \in H$ and let $p \in B(H)$ be the projection onto $\C \eta _0$.
Then there exist $a_n \in \A$ 
such that $(1 \otimes p) u_n (1 \otimes p) = a_n \otimes p \in \A \btimes pB(H)p \cong \A \btimes \C p$.
Since $\A$ is abelian, there exists a unitary element $v_n \in \A$ such that $a_n = v_n |a_n|$.
We will show $u_n - v_n \otimes 1 \to 0$ in the strong-$*$ topology.
First, we will show $u_n - a_n \otimes 1 \to 0$ in the strong-$*$ topology.
Fix a faithful representation $\A \subset B(K)$ and take $\xi \in K , \eta \in H$ arbitrarily.
Then, for sufficiently large $n$,
\begin{align*}
u_n (\xi \otimes \eta) &\approx (1 \otimes (\eta \otimes \eta _0 ^*)) u_n (\xi \otimes \eta _0) \\
&= (1 \otimes (\eta \otimes \eta _0 ^*)) (a_n \otimes p) (\xi \otimes \eta _0) \\
&= (a_n \otimes 1) (\xi \otimes \eta)
\end{align*}
where $\eta \otimes \eta _0 ^*$ is a Schatten form; 
$\eta \otimes \eta _0 ^* (\zeta) = \innprod{\zeta}{\eta _0} \eta$.
Similarly, one has $u_n^* (\xi \otimes \eta) \approx (a_n^* \otimes 1) (\xi \otimes \eta)$ for sufficiently large $n$.
Finally, we should prove $|a_n| \to 1$ in $\A$ in the ultrastrong-$*$ topology; 
if this holds, then $a_n \otimes 1 - v_n \otimes 1 = v_n ((|a_n| - 1) \otimes 1) \to 0$ in the ultrastrong-$*$ topology.
Since $t \mapsto \sqrt{t \vee 0 } $ is a linear growth function, 
it suffices to prove $a_n^* a_n \to 1$ in the strong-$*$ topology.
For arbitrary $\xi \in K$,
\begin{align*}
\| a_n^*a_n \xi - \xi \| &= \| (a_n^* a_n \otimes p) \xi \otimes \eta _0 - \xi \otimes \eta _0 \| \\
&= \| (1 \otimes p) u_n^* (1 \otimes p) u_n (1 \otimes p) \xi \otimes \eta _0 - \xi \otimes \eta _0 \| \\
&\to 0.
\end{align*}
Therefore, a central sequence $\{ u_n \}$ in $M$ is trivial.
\end{proof}

\begin{lemma}\label{notypeIsummand}
Let $M$ be a separable von Neumann algebra.
Suppose there exists a faithful normal state $\phi$ and two central sequences $\{ u_n \}, \{ v_n \}$ 
such that $\phi ( (u_n v_n u_n^* v_n^*)^k)$ converges to $0$ for every $k \in \mathbb{Z} \setminus \{ 0 \}$.
Then $M$ has no nonzero type I summand.
\end{lemma}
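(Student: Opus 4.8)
The plan is to argue by contradiction. Assume $M$ has a nonzero type I summand and fix a nonzero central projection $z \in M$ with $zM$ of type I; set $e := 1 - z$ and $w_n := u_n v_n u_n^* v_n^*$, noting that $z w_n^k = (z w_n)^k$ for all $k \in \mathbb{Z}$ since $z$ is a central projection. First, the compressed sequences $\{z u_n\}$ and $\{z v_n\}$ are central sequences of the separable type I von Neumann algebra $zM$: for $y \in zM$ one has $[y, z u_n] = [y, u_n] \in zM$, and for every normal positive functional $\omega$ on $zM$ we have $\omega\bigl([y,u_n]^*[y,u_n]\bigr) = \rho\bigl([y,u_n]^*[y,u_n]\bigr) \to 0$, where $\rho := \omega(z\,\cdot\,)$ is a normal positive functional on $M$ (and similarly for the adjoint). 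By Lemma~\ref{keylem}, both compressed sequences are therefore trivial: there are unitaries $a_n, b_n$ in the centre of $zM$ with $z u_n - a_n \to 0$ and $z v_n - b_n \to 0$ ultrastrongly*.

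The one genuinely delicate point is to turn these statements into honest convergences, so that joint ultrastrong* continuity of multiplication on bounded sets becomes applicable. Since $a_n$ is a central unitary, $a_n^* x a_n = x$ for all $x \in zM$; hence $\omega\bigl((a_n^* Y)^*(a_n^* Y)\bigr) = \omega(Y^* Y)$ and $\omega\bigl((a_n^* Y)(a_n^* Y)^*\bigr) = \omega(Y Y^*)$ for every normal positive functional $\omega$, so left multiplication by $a_n^*$ is an ultrastrong* isometry of $zM$. Consequently $\tilde u_n := a_n^*(z u_n) = z + a_n^*(z u_n - a_n)$ converges ultrastrongly* to $z$, and likewise $\tilde v_n := b_n^*(z v_n) \to z$. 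Because $a_n$ and $b_n$ are central, a direct computation gives $\tilde u_n \tilde v_n \tilde u_n^* \tilde v_n^* = (a_n^* b_n^* a_n b_n)\,(z u_n)(z v_n)(z u_n)^*(z v_n)^* = z w_n$. Since $\tilde u_n$ and $\tilde v_n$ are contractions converging ultrastrongly* to $z$, joint ultrastrong* continuity of multiplication on the unit ball yields $z w_n = \tilde u_n \tilde v_n \tilde u_n^* \tilde v_n^* \to z$, hence $z w_n^k = (z w_n)^k \to z$ for every $k \in \mathbb{Z}$, and so $\phi(z w_n^k) \to \phi(z)$ by normality of $\phi$.

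It remains to produce a contradiction. If $e = 0$, then $\phi(w_n^k) = \phi(z w_n^k) \to \phi(z) = 1$, contradicting the hypothesis $\phi(w_n) \to 0$; so $e \neq 0$, and $\phi(e) > 0$ and $\phi(z) > 0$ since $\phi$ is faithful. The hypothesis together with the previous paragraph gives $\phi(e w_n^k) = \phi(w_n^k) - \phi(z w_n^k) \to -\phi(z)$ for each $k \neq 0$, while $\phi(e w_n^0) = \phi(e)$. Now $e w_n$ is a unitary of $eM$, and for each $N \in \mathbb{N}$ the Fej\'er-kernel identity gives
\[
\sigma_{N,n} := \sum_{k=-N}^{N}\Bigl(1-\tfrac{|k|}{N+1}\Bigr)(e w_n)^k = \frac{1}{N+1}\Bigl(\sum_{j=0}^{N}(e w_n)^j\Bigr)^{\!*}\Bigl(\sum_{j=0}^{N}(e w_n)^j\Bigr) \ge 0 ,
\]
so $\phi(\sigma_{N,n}) \ge 0$. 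Letting $n \to \infty$ in this finite sum and using $\sum_{0<|k|\le N}\bigl(1-\tfrac{|k|}{N+1}\bigr) = N$, we obtain $\phi(e) - N\phi(z) = \lim_{n\to\infty}\phi(\sigma_{N,n}) \ge 0$ for every $N$, which is absurd because $\phi(z) > 0$. Hence $M$ has no nonzero type I summand.
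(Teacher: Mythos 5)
Your proof is correct and follows essentially the same route as the paper's: compress by the central type~I projection, invoke Lemma~\ref{keylem} to conclude $zw_n \to z$ ultrastrongly*, and then play the positivity of a Fej\'er-type trigonometric polynomial in the unitary $w_n$ against the vanishing of the moments $\phi(w_n^k)$. The only differences are cosmetic: you use the explicit Fej\'er kernel where the paper picks an abstract nonnegative $f \in C(\mathbb{T})$ with $f(1)=1$ and small integral, and you spell out (correctly) the deduction of $zw_n \to z$ from the triviality of the compressed central sequences, a step the paper merely asserts.
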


\begin{proof}
For simplicity, we write $u_n v_n u_n^* v_n^*$ as $w_n$.
Note that for every $f \in C(\mathbb{T})$, $\phi (f(w_n)) \to \int_{\mathbb{T}} f(z) \, dz$ where $\mathbb{T} = \{ z \in \C \mid | z | = 1 \}$, 
since trigonometric polynomials are dense in $C(\mathbb{T})$.
Let $p \in M$ be a central projection such that $pM$ is of type I.
Since every central sequence in a type I von Neumann algebra is trivial 
and $\{ p u_n \}$ and $\{ p v_n \}$ are central sequences in $pM$, 
$p w_n$ converges to $p$ in the ultrastrong-$*$ topology.
Then for every $f \in C(\mathbb{T})$, $\phi (p f(w_n)) \to \phi (p) f(1)$.
Take $\varepsilon > 0 $ arbitrarily and $f \in C(\mathbb{T})$ such that $f \geq 0$, $f(1) = 1$ and $\int _\mathbb{T} f(z) \, dz < \varepsilon$.
Then $\phi (f(w_n)) \geq \phi (p f(w_n))$, so $\phi (p) \leq \int _{\mathbb{T}} f(z) \, dz < \varepsilon$.
Since $\varepsilon$ is arbitrary, $\phi (p) = 0$, i.e., $p = 0$.
Therefore $M$ has no nonzero type I summand.
\end{proof}

\subsection{Hecke algebras}
We refer the reader to \cite{klq} and \cite{lln} for definitions and basic properties of Hecke algebras.

Suppose $(G,H)$ is a Hecke pair and $H \bs G$ is a discrete space.
Then the Hecke algebra $\halg{G}{H}$ acts on $\ell ^2 (H \bs G)$ from left; 
define $\lambda \colon \halg{G}{H} \to B(\ell ^2(H \bs G))$ by 
\[
\left[ \lambda (f) \xi \right] (Hx) = \sum_{Hy \in H \bs G} f(Hxy^{-1}) \xi (Hy)
\]
for $f \in \halg{G}{H}$ and $\xi \in \ell ^2 (H \bs G)$.
We may omit $\lambda$ and write $\halg{G}{H} \subset B(\ell^2(H \bs G))$.

Let $\rho \colon G \to B(\ell^2 (H \bs G))$ be the right quasi-regular representation defined by $[ \rho _s \xi ] (x) = \xi (xs)$.
One can easily check that $\halg{G}{H} \subset \rho (G) '$.
Moreover, one has $\halg{G}{H}'' = \rho (G) '$ (see \cite{ad} Theorem 1.4.).
The unit vector $\delta _H \in \ell ^2 (H \bs G)$ is a separating vector for $\halg{G}{H}$, since $\delta _H$ is a $\rho(G)$-cyclic vector.
Moreover, if $R(x) = R(x^{-1})$ for every $x \in G$, then it is not hard to see that $\delta _H$ is a tracial vector, 
i.e., the vector state associated with $\delta _H$ is a trace on $\lambda (\halg{G}{H})$.
In particular, the vector state $x \mapsto \innprod{x \delta _H}{\delta _H}$ is a faithful tracial state 
of $\halg{G}{H}$ for a unimodular locally compact group $G$ and its compact open subgroup $H$.

For a finite group $G$ and its subgroup $H \leq G$, 
note that the Hecke algebra $\halg{G}{H}$ is identical to $p_H \C [G] p_H$ 
where $p_H = \frac{1}{|H|} \sum _{h \in H} h \in \C [G]$ is a projection (see \cite[Corollary 4.4]{klq}).


The next proposition is a special case of \cite[Proposition 1.3]{lln}.

\begin{proposition}\label{llnprop}
Let $G$ be a finite group acting on a finite group $V$, 
and let $\Gamma$ be a subgroup of $G$ leaving a subgroup $V_0$ of $V$ invariant.
Then we have a canonical embedding $\halg{V}{V_0}^{\Gamma} \hookrightarrow \halg{V \rtimes G}{V_0 \rtimes \Gamma}$.
Moreover, the canonical traces are consistent with this embedding. 
\end{proposition}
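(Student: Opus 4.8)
Since $G$ is finite, $V_0 \rtimes \Gamma$ is automatically a finite-index subgroup of $V \rtimes G$, so $(V \rtimes G, V_0 \rtimes \Gamma)$ is a Hecke pair, and similarly $(V, V_0)$ is a Hecke pair; moreover the Hecke algebras in question are finite-dimensional, so all topological subtleties evaporate. The approach is to exhibit the embedding explicitly at the level of functions on double coset spaces and then check multiplicativity, $*$-compatibility, and trace-compatibility by direct computation. First I would recall that $\halg{V}{V_0}$ has a linear basis indexed by double cosets $V_0 v V_0$, and the $\Gamma$-action on $V$ (restricted from $G$) descends to an action on $V_0 \bs V / V_0$ since $\Gamma$ normalizes $V_0$; thus $\halg{V}{V_0}^{\Gamma}$ is spanned by $\Gamma$-orbit sums of double-coset indicator functions. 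On the other side, I would identify the double cosets $(V_0 \rtimes \Gamma) \bs (V \rtimes G) / (V_0 \rtimes \Gamma)$ that are supported inside $V$ (i.e. of the form $(V_0 \rtimes \Gamma)\, v\, (V_0 \rtimes \Gamma)$ with $v \in V$) and observe that, because $\Gamma$ acts on $V$, such a double coset is exactly the union $\bigcup_{\gamma \in \Gamma} V_0 (\gamma \cdot v) V_0$ — that is, it is a single $\Gamma$-orbit of $V_0$-double cosets in $V$. This sets up the map: send the $\Gamma$-orbit sum $\sum_{\gamma} \chi_{V_0 (\gamma \cdot v) V_0}$ to a suitable scalar multiple of $\chi_{(V_0 \rtimes \Gamma) v (V_0 \rtimes \Gamma)}$, the scalar being chosen to make it multiplicative (it will be forced by comparing the identity elements and the normalizations $R(\cdot)$ of the two Hecke structures).

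The key steps, in order: (1) verify the map is well-defined and linear — immediate from the orbit-sum description; (2) verify it is a homomorphism — this is the computational heart, where one expands a convolution product $f * g$ in $\halg{V \rtimes G}{V_0 \rtimes \Gamma}$ for $f, g$ supported in $V$ and checks that only terms supported in $V$ survive (using that $G = $ ``quotient'' acts but the support constraint keeps us in $V$), then matches the resulting structure constants with those of the $\Gamma$-invariant convolution in $\halg{V}{V_0}$; (3) verify $*$-compatibility, which amounts to noting $(V_0 v V_0)^{-1} = V_0 v^{-1} V_0$ and that the modular-type factor $\Delta$ in the involution is trivial here since everything is finite, hence unimodular; (4) verify injectivity — clear, since distinct $\Gamma$-orbits of $V_0$-double cosets map to distinct $(V_0 \rtimes \Gamma)$-double cosets; (5) verify the canonical traces match — the canonical trace on each Hecke algebra is the coefficient of the identity double coset (equivalently the vector state at $\delta$), and one checks that the identity of $\halg{V}{V_0}$ maps to the identity of $\halg{V \rtimes G}{V_0 \rtimes \Gamma}$ and that the trace values on basis elements agree, again using finiteness so that $R(x) = R(x^{-1})$ throughout.

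The main obstacle I anticipate is bookkeeping in step (2): getting the normalization scalar exactly right so that the map is unital and multiplicative simultaneously, and carefully justifying why, in the convolution $f*g$ over $V \rtimes G$, the sum over cosets $(V_0\rtimes\Gamma) y$ effectively reduces to a sum over $V_0$-cosets in $V$ together with a sum over $\Gamma$ — the latter being precisely what produces the $\Gamma$-averaging on the $\halg{V}{V_0}$ side. I would organize this by first passing to the convenient picture of $\halg{V \rtimes G}{V_0 \rtimes \Gamma}$ as $p\, \C[V \rtimes G]\, p$ for the idempotent $p = \frac{1}{|V_0 \rtimes \Gamma|}\sum_{h \in V_0 \rtimes \Gamma} h$ (the finite-group analogue of the $p_H C_c(G) p_H$ description recalled above), and likewise $\halg{V}{V_0} = q\,\C[V]\,q$ with $q = \frac{1}{|V_0|}\sum_{h \in V_0} h$; then the embedding is induced by the inclusion $\C[V] \hookrightarrow \C[V \rtimes G]$ followed by compression, and one only needs to check that this compression lands in and is faithful onto the $\Gamma$-invariants $q\C[V]q$ compressed appropriately, with the trace on $p\C[V\rtimes G]p$ restricting to that on $q\C[V]q$ because both are the restriction of the canonical trace on the ambient group algebra. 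In this picture, well-definedness, multiplicativity and trace-compatibility all become transparent consequences of functoriality of the group-algebra construction and compatibility of canonical traces under group inclusions, and only the identification of the image with $\halg{V}{V_0}^{\Gamma}$ requires the orbit-counting argument sketched above.
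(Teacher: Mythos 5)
Your plan is correct and, in its final organized form (compressing the $\Gamma$-invariants of $p_{V_0}\C[V]p_{V_0}$ by the averaging idempotent $p_{V_0\rtimes\Gamma}=p_{V_0}p_{\Gamma}$), is essentially the paper's own proof. The one justification to sharpen is that multiplicativity of this compression is not ``functoriality'' (a generic compression is not a homomorphism) but the fact that $p_{\Gamma}$ commutes with $p_{V_0}$ and with $\C[V]^{\Gamma}$ --- which is precisely where $\Gamma$-invariance of $V_0$ and the restriction to $\Gamma$-invariant elements are used --- and the $|\Gamma|$ mismatch between the two trace normalizations is exactly absorbed by the factor $\tfrac{1}{|\Gamma|}$ in $p_{\Gamma}$, since $\tau(\gamma x)=0$ for $x\in\C[V]$ and $\gamma\neq e$.
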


\begin{proof}
We will prove that there exists a canonical, trace preserving embedding $(p_{V_0} \C [V] p_{V_0})^{\Gamma} \hookrightarrow p_{V_0 \rtimes \Gamma} \C [V \rtimes G] p_{V_0 \rtimes \Gamma}$ 
where $p_H = \frac{1}{|H|} \sum _{h \in H} h$ for a subgroup $H$. 
Since $\Gamma$ leaves $V_0$ invariant, $p_{V_0}$ commutes with every element of $\Gamma$ in $\C [V_0 \rtimes \Gamma]$.
In particular, $p_{V_0}$ commutes with $p_\Gamma$ 
and $p_{V_0 \rtimes \Gamma} = p_{V_0} p_{\Gamma} = p_{\Gamma} p_{V_0}$. 
Note that $p_{\Gamma}$ commutes with every element in $\C [V] ^{\Gamma}$. 
Therefore, multiplication with $p_{\Gamma}$ is a $*$-homomorphism 
from $(p_{V_0} \C [V] p_{V_0})^{\Gamma} \cong p_{V_0} \C [V]^{\Gamma} p_{V_0}$ 
to $p_{V_0 \rtimes \Gamma} \C [V]^{\Gamma} p_{V_0 \rtimes \Gamma} \subset p_{V_0 \rtimes \Gamma} \C [V \rtimes G] p_{V_0 \rtimes \Gamma}$. 
This map preserves the canonical trace, since it is spatially implemented 
by the canonical isometry $W \colon \ell ^2 (V_0 \bs V) \to \ell ^2 ((V_0 \rtimes \Gamma ) \bs (V \rtimes G))$ , 
and $W^* \delta _{V_0 \rtimes \Gamma} = \delta _{V_0}$. 
Since the canonical traces are faithful, this $*$-homomorphism is an embedding.
\end{proof}

\begin{corollary}\label{cor}
In addition to the assumptions of Proposition $\ref{llnprop}$, suppose $G$ leaves $V_0$ invariant. 
Then there is a canonical trace preserving embedding $\halg{G}{\Gamma} \hookrightarrow \halg{V \rtimes G}{V_0 \rtimes \Gamma}$ 
and $\halg{V}{V_0} ^{G} \subset \halg{G}{\Gamma} '$ in $\halg{V \rtimes G}{V_0 \rtimes \Gamma}$.
\end{corollary}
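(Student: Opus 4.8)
The plan is to deduce both assertions from Proposition \ref{llnprop} together with elementary manipulations of the idempotents $p_{V_0} = \frac{1}{|V_0|}\sum_{v \in V_0} v$ and $p_\Gamma = \frac{1}{|\Gamma|}\sum_{\gamma \in \Gamma}\gamma$ inside $\C [V \rtimes G]$, the point being that the extra hypothesis ``$G$ leaves $V_0$ invariant'' makes $p_{V_0}$ commute with all of $\C [G] \subseteq \C [V \rtimes G]$ (since $g p_{V_0} g^{-1} = p_{g \cdot V_0} = p_{V_0}$). For the embedding $\halg{G}{\Gamma} \hookrightarrow \halg{V \rtimes G}{V_0 \rtimes \Gamma}$ I would mimic the proof of Proposition \ref{llnprop} with the ``vertex part'' and ``group part'' interchanged: identify $\halg{G}{\Gamma}$ with $p_\Gamma \C [G] p_\Gamma$ and $\halg{V \rtimes G}{V_0 \rtimes \Gamma}$ with $p_{V_0 \rtimes \Gamma}\C [V \rtimes G] p_{V_0 \rtimes \Gamma}$, where $p_{V_0 \rtimes \Gamma} = p_{V_0} p_\Gamma = p_\Gamma p_{V_0}$. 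Because $p_{V_0}$ commutes with $\C [G]$ and with $p_\Gamma$, ``multiplication by $p_{V_0}$'' is a unital $*$-homomorphism $p_\Gamma \C [G] p_\Gamma \to p_{V_0 \rtimes \Gamma}\C [G] p_{V_0 \rtimes \Gamma} \subseteq \halg{V \rtimes G}{V_0 \rtimes \Gamma}$; it is injective by comparing $G$-components (if $\sum_g c_g g \in \C [G]$ and $p_{V_0}\sum_g c_g g = 0$, then each $c_g p_{V_0} = 0$, hence $c_g = 0$). For the trace I would exhibit the canonical isometry $W \colon \ell ^2(\Gamma \bs G) \to \ell ^2((V_0 \rtimes \Gamma) \bs (V \rtimes G))$ sending $\Gamma g \mapsto (V_0 \rtimes \Gamma) g$, which is well defined and injective because $(V_0 \rtimes \Gamma) \cap G = \Gamma$, observe that the embedding is implemented by $x \mapsto W x W^*$, and note $W^* \delta _{V_0 \rtimes \Gamma} = \delta _{\Gamma}$, exactly as in the proof of Proposition \ref{llnprop}.

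For the commutation statement, note $\halg{V}{V_0}^{G} \subseteq \halg{V}{V_0}^{\Gamma}$, so Proposition \ref{llnprop} already furnishes a trace-preserving embedding of $\halg{V}{V_0}^{G}$ into $\halg{V \rtimes G}{V_0 \rtimes \Gamma}$, whose image consists of the elements $p_\Gamma p_{V_0} a p_{V_0}$ with $a \in \C [V]^{G}$ (equivalently $p_{V_0} a p_{V_0} p_\Gamma$, since $a$ and $p_{V_0}$ both commute with $p_\Gamma$), while by the first part the image of $\halg{G}{\Gamma}$ consists of the elements $p_{V_0} p_\Gamma c p_\Gamma$ with $c \in \C [G]$. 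I would then check that these two families commute in $\C [V \rtimes G]$ using only: (i) $p_{V_0}$ commutes with $\C [G]$; and (ii) $\C [V]^{G}$ commutes with $\C [G]$ in $\C [V \rtimes G]$, because $g a = a g$ for $g \in G$ and $a \in \C [V]^{G}$. A short bookkeeping computation with $p_{V_0}^2 = p_{V_0}$, $p_\Gamma ^2 = p_\Gamma$ and $p_{V_0} p_\Gamma = p_\Gamma p_{V_0}$ then reduces both products to $p_{V_0} a p_{V_0} p_\Gamma c p_\Gamma$, giving $\halg{V}{V_0}^{G} \subset \halg{G}{\Gamma}'$ inside $\halg{V \rtimes G}{V_0 \rtimes \Gamma}$.

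I do not expect a genuine obstacle: the corollary is entirely formal once Proposition \ref{llnprop} is available, as the two nontrivial inputs (the commutativity of $p_{V_0}$ with $\C [G]$, and of $\C [V]^{G}$ with $\C [G]$) are exactly the semidirect-product identities above. The only place asking for a little care is the verification in the first part that the $p_{V_0}$-multiplication embedding really is the compression by the isometry $W$, i.e.\ that the $\C [V]$-part of $\C [V \rtimes G]$ acts trivially on the span of $\{ \delta _{(V_0 \rtimes \Gamma) g} : g \in G \}$; this is routine given that $G$ normalizes $V_0$, and it is what makes the embedding trace preserving.
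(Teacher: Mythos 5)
Your proposal is correct and follows essentially the same route as the paper: the first assertion by rerunning the Proposition~\ref{llnprop} argument with the roles of the ``vertex'' and ``group'' parts exchanged (multiplication by $p_{V_0}$, implemented by the canonical isometry $W$ with $W^*\delta_{V_0\rtimes\Gamma}=\delta_\Gamma$), and the second from the identities $p_{V_0}p_\Gamma=p_\Gamma p_{V_0}$ and $\C[V]^G\subset\C[G]'$. The only cosmetic difference is that you verify injectivity by comparing $G$-components, whereas the paper deduces it from faithfulness of the canonical traces.
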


\begin{proof}
The same argument as above shows that the first assertion holds.
To show the second assertion, we identify $\halg{V}{V_0} ^{G} $ and $ \halg{G}{\Gamma}$ with $p_{V_0} \C [V] ^G p_{V_0}$ and $p_{\Gamma} \C [G] p_{\Gamma}$, respectively.
The assertion follows from the fact that $p_{V_0} p_{\Gamma} = p_{\Gamma} p_{V_0}$ 
and $\C [V] ^G \subset \C [G] '$.
\end{proof}

\subsection{Locally compact groups}
In this paper, topological groups are assumed to be Hausdorff.
Let $G$ be a locally compact second countable group and $\mu$ be its left Haar measure.
The \textbf{left regular representation} of $G$ is a unitary representation $\lambda \colon G \to \mathcal{U} (L^2(G))$ 
defined by $(\lambda _g f) (h) = f(g^{-1}h)$ for $f \in L^2(G)$ 
where $L^2(G)$ is a Haar square integrable functions on $G$.
The von Neumann algebra $\{ \lambda _g \mid g \in G \} '' \subset B(L^2(G))$ is called the group von Neumann algebra.
The representation $\lambda$ extends to a representation of $L^1(G)$: 
$\lambda (f) g = f * g$ for $f \in L^1(G)$ and $g \in L^2(G)$.

A unitary representation $( \pi , H)$ of $G$ is called of \textbf{type I} 
if the associated von Neumann algebra $\pi (G) '' \subset B(H)$ is of type I.
A locally compact group $G$ is called of \textbf{type I} if all its unitary representations are of type I.
See \cite[Chapter 6, 7]{bh} for more details and properties of type I groups.

\section{Neretin groups}
Let $d , k \geq  2$ be integers and $\tree{d}{k}$ be a rooted tree 
such that the root has $k$ adjacent vertices 
and the others have $d+1$ adjacent vertices.
An \textbf{almost automorphism} of $\tree{d}{k}$ is a triple $(A, B, \phi)$
where $A, B \subset \tree{d}{k}$ are finite subtrees containing the root 
with $| \partial A| = | \partial B |$ 
and $\phi \colon \tree{d}{k} \setminus A \rightarrow \tree{d}{k} \setminus B$ is an isomorphism.
The \textbf{Neretin group} $\N{d}{k}$ is the quotient of the set of all almost automorphisms by the relation which identifies two almost automorphisms $(A_1, B_1, \phi_1), \, (A_2, B_2, \phi_2)$ if there exits a finite subtree $\tilde{A} \subset \tree{d}{k}$ containing the root 
such that $A_1 , \, A_2 \subset \tilde{A}$ 
and $\phi_1 | _{\tree{d}{k} \setminus \tilde{A}} = \phi_2 | _{\tree{d}{k} \setminus \tilde{A}}$.
One can easily check that $\N{d}{k}$ is a group.

Let $d$ be the graph metric on $\tree{d}{k}$, $v_0$ be the root of $\tree{d}{k}$ and $B_n := \{ v \in \tree{d}{k} \mid d(v_0 , v) \leq n \}$ for $n \geq 0$.
Every automorphism of $\tree{d}{k}$ leaves $B_n$ invariant.
For each $n \geq 0$, $\mathcal{O}_{d, k}^{(n)}$ denotes the subgroup consisting of automorphisms on $\tree{d}{k} \setminus B_n$ 
and we write $\mathcal{O}_{d, k} := \bigcup_{n=0}^{\infty} \mathcal{O}_{d, k}^{(n)}$.
Each $\mathcal{O}_{d, k} ^{(n)}$ is a subgroup of $\N{d}{k}$ containing $K := \aut{\tree{d}{k}}$.
Let $V_n : = \partial B_n = \{ v \in \tree{d}{k} \mid d(v , v_0) = n \}$.
Note that $\ogr ^{(n)} \cong \aut{\tree{d}{d}} \wr \S_{|V_n|} = \aut{\tree{d}{d}} ^{|V_n|} \rtimes \S_{|V_n|}$ 
and $\mathcal{O}_{d, d} ^{(l)} \wr \S _{|V_n|} < \ogr ^{(n+l)}$.

The Neretin group $\N{d}{k}$ admits a totally disconnected locally compact group topology 
such that the inclusion map $K \hookrightarrow \N{d}{k}$ is continuous and open (\cite[Theorem 4.4]{gl}).
The Neretin group $\N{d}{k}$ is compactly generated and simple; 
see \cite{gl}.

The group $\ogr$ is an open subgroup of $\N{d}{k}$.
It is unimodular and amenable since $\ogr$ is a increasing union $\bigcup _{n=1}^{\infty} \ogr ^{(n)}$ of its compact subgroups.

\section{Proof of Theorem}
We normalize the Haar measure $\mu$ on $\ogr$ so that $\mu (K) = 1$. 
Let $p = \lambda (\chi _K)$ be the projection onto the subspace of left $K$-invariant functions.
This subspace can be identified with $\ell ^2 (K \bs \ogr)$.
The  Hecke algebra $\halg{\ogr}{K} \subset B(\ell^2 (K \bs \ogr))$ is a dense subalgebra of the corner $pL(\ogr)p \subset B(\ell^2(K \bs \ogr))$ 
with respect to the weak operator topology.
We will show that $p L(\ogr) p$ is of type II.

Since $K$ acts on $V_n$, 
there exists a canonical group homomorphism $K \to \aut{V_n} \cong \S_{|V_n|}$.
The range of this homomorphism is denoted by $P_n = \aut{B_n} < \S _{|V_n|}$.
Similarly, let $Q_n$ be the range of the canonical group homomorphism $\aut{\tree{d}{d}} \to \aut{ W_n }$, 
where $W_n$ is the subset $\{ v \in \tree{d}{d} \mid d(v , v_0) = n \}$ of $\tree{d}{d}$.
One has $\halg{\mathcal{O}_{d, k}}{K} \cong \cup _{n=1}^{\infty} \halg{\mathcal{O}_{d, k}^{(n)}}{K}$ and $\halg{\mathcal{O}_{d, k}^{(n)}}{K} \cong \halg{\S_{|V_n|}}{P_n}$.
We use this identification freely.
For finite groups $G_1, G_2$ and their subgroups $H_i < G_i$, $\halg{G_1}{H_1} \otimes \halg{G_2}{H_2} \cong \halg{G_1 \times G_2}{H_1 \times H_2}$. 
Proposition $\ref{llnprop}$ for $G=\S _{|V_n|} , \Gamma = P_n , V = \S _{d^l}^{|V_n|} , V_0 = Q_l^{|V_n|}$ implies
\begin{align*}
((\halg{\mathcal{O}_{d, d}^{(l)}}{\aut{\tree{d}{d}}})^{\otimes |V_n|})^{P_n} &\cong (\halg{\S _{d^l}^{|V_n|}}{Q_l^{|V_n|}})^{P_n} \\
&\hookrightarrow \halg{\S _{d^l}^{|V_n|} \rtimes \S _{|V_n|}}{Q_l^{|V_n|} \rtimes P_n} \\
&= \halg{\S _{d^l}^{|V_n|} \rtimes \S _{|V_n|}}{P_{n+l}} \\
&\subset \halg{\S_{|V_{n+l}|}}{P_{n+l}}
\end{align*}
for $l \in \mathbb{N}$.
Moreover, Corollary $\ref{cor}$ implies $(\halg{\S _{d^l}^{|V_n|}}{Q_l^{|V_n|}})^{\S _{|V_n|}} \subset \halg{\S _{|V_n|}}{P_n}'$.
Since $\halg{\mathcal{O}_{d, d} ^{(l)}}{\aut{\tree{d}{d}}} \cong \halg{\S _{d^l}}{Q_l}$ and $(\S _{d^l} , Q_l)$ is not a Gelfand pair for $l \geq 3$ (see \cite[Theorem 1.2]{gm}), 
$\halg{\mathcal{O}_{d, d} ^{(3)}}{\aut{\tree{d}{d}}}$ is noncommutative.

Let $\tau$ be the vector state associated with $\delta _{K} \in \ell ^2 (K \bs \ogr)$.
This is a trace, since $\ogr$ is a unimodular locally compact group and $K$ is its compact open subgroup.
Note that $\tau (x^{\otimes |V_n|}) = (\tau (x))^{|V_n|}$ for $x \in \halg{\mathcal{O}_{d, d} ^{(3)}}{\aut{\tree{d}{d}}}$ where $\tau$ also denotes the canonical trace on $\halg{\mathcal{O}_{d, d} ^{(3)}}{\aut{\tree{d}{d}}}$.
Since $\halg{\mathcal{O}_{d, d} ^{(3)}}{\aut{\tree{d}{d}}}$ is a non-commutative finite dimensional algebra, 
there exist two unitaries $u , v \in \halg{\mathcal{O}_{d, d} ^{(3)}}{\aut{\tree{d}{d}}}$ 
such that $| \tau ((u^*v^*uv)^k) | < 1$ and $| \tau ((v^*u^*vu)^k) | < 1$ for all $k \in \mathbb{Z} \setminus \{ 0 \}$.
Set $u_n := u^{\otimes |V_n|} \in \halg{\ogr ^{(n)}}{K}'$ and $v_n := v^{\otimes |V_n|} \in \halg{\ogr ^{(n)}}{K}'$.
Then for every $x \in \halg{\ogr}{K}'' = pL(\ogr)p$, $ [x,u_n] \to 0$ and $[x,v_n] \to 0$ in the ultrastrong-$*$ topology.
Thus $\{ u_n \}$ and $\{ v_n \}$ are central sequences.
In addition, $\tau ((u_n v_n u_n^* v_n ^*)^k ) = \tau ( (u v u^* v ^*)^k )^n \to 0$ as $n \to \infty$ for every $k \in \mathbb{Z} \setminus \{ 0 \}$.
So by Lemma $\ref{notypeIsummand}$, $p L(\ogr) p$ has no nonzero type I summand and it is of type II.

Let $K_n := \{ \phi \in K \mid \phi | _{B_n} = \mathrm{id} _{B_n} \}$ 
and $p_n := \frac{1}{\mu (K_n)} \lambda (\chi _{K_n}) \in L(\ogr)$.
Then $\{ p_n \}$ converges $1_{L(\ogr)}$ in the strong operator topology.
Applying the same argument as above to $p_n L(\ogr) p_n$, one finds that $p_n L(\ogr) p_n$ is of type II.
Therefore $L(\ogr)$ is of type II.


\end{document}